\newtheorem{theorem}{Theorem}
\newtheorem{corollary}{Corollary}
\begin{document}

\setlength{\parindent}{0pt}

\title{About extension of upper semicontinuous multi-valued maps and applications}
\author{Y. Askoura}
\maketitle
Gretia, INRETS, 2 Avenue du Général Malleret-Joinville, 94114 Arcueil, France\\
email : askoura@inrets.fr,

\begin{abstract}
We formulate a multi-valued version of the Tietze-Urysohn extension
theorem. Precisely, we prove that any upper semicontinuous
multi-valued map with nonempty closed convex values defined on a
closed subset (resp. closed perfectly normal subset) of a completely
normal (resp. of a normal) space $X$ into the unit interval $[0,1]$
can be extended to the whole space $X$. The extension is upper
semicontinuous with nonempty closed convex values. We apply this
result for the extension of real semicontinuous functions, the
characterization of completely normal spaces, the existence of
Gale-Mas-Colell and Shafer-Sonnenschein type fixed point theorems
and the existence of equilibrium for qualitative games.
\end{abstract}

{\bf Mathematics Subject Classification:} 54C60, 54C20

{\bf Keywords:} Extension of multi-valued maps, Tietze-Urysohn
extension theorem, Fixed points, Maximal elements, Qualitative
equilibrium

\section{Introduction}
The aim of this paper is to extend upper semicontinuous
(\textit{usc} in short) multi-valued maps over spaces as large as
possible. In particular, we want to avoid metrizability in
definition domains. We are interested by extending usc multi-valued
maps defined on a closed subset of a given topological space to
another. Similar results are already obtained by Cellina \cite{CEL},
Brodskii \cite{BRO1,BRO2}, Tan and Wu \cite{TW} and Ma \cite {MA},
using the metrizability of the domain. A more general result, in
this direction, is that of Borges \cite{BORGES}, which established
extensions for usc maps defined on closed subsets of stratifiable
spaces to any topological space. Cite also results of Drozdovskii
and Filippov \cite{DF} and Shishkov \cite{SHI} which extended usc
maps defined on closed subsets of paracompact completely normal
spaces to completely metrizable ones. Another type of extensions,
which is not concerned here, is to extend maps defined on dense
subsets \cite{HOL,LL}.

In the sequel, when speaking about extension of maps, we signify
extensions of the same type (single valued if the map is single
valued and multi-valued if the map is multi-valued) and preserving
the given continuity concept (continuity if the original map is
single valued and continuous, upper semicontinuity if the original
map is multi-valued and upper semicontinuous). If we try to compare
the extension of usc multi-valued maps with continuous single-valued
maps, two things appear. First, for usc maps, the extension need
only be usc, then we are tempted to say that the first extension is
easier. But, within the definition domains, for a map, to be
continuous and single-valued is very constraining comparatively with
the fact to be usc and multi-valued, then provides us additional
properties. So the two problems are, a priori, quite different, and
without evident comparison between them. The results of this paper
(and some of the cited ones) prove, in fact, that the extension of
multi-valued maps is more constraining. We consider an usc
multi-valued map $T:A\subset E\rightarrow I,$ where $A$ is a closed
subset of a topological space $E$ and $I$ the unit real interval. We
obtain an extension of $T$ when $E$ is completely normal or $E$ is
normal and $A$ is perfectly normal. As it is known, this type of
results provides a characterization of completely normal spaces.

An extension result (for multi-valued maps) in an infinite
uncountable product of spaces gives directly some existence
results of maximal elements and fixed points (\cite{GMC1},
\cite{SS} and \cite{GOU}) needed in game theory. The problem is to
avoid in the proof, properties which are not satisfied in an
uncountable products of usual (or a simply important class of)
spaces (like metrizability). Unfortunately, the complete normality
is not, in general, a property of an infinite uncountable product
of spaces. So, the results characterizing the complete normality
by extensions of usc multi-valued maps, suggest us to search
maximal elements (resp. equilibrium for qualitative games) for an
uncountable usc multi-valued maps (resp. with an uncountable set
of players and usc preference correspondences) with additional
requirements. Such applications are given in the second part of
this paper.

Our result can be seen as a multi-valued version of the
Tietze-Urysohn extension theorem. As it is known, the original proof
of this fundamental result uses the uniform convergence of a
sequence of functions. However, some researchers
(\cite{BON,MAI,OSS,SCO,TON}) asked for a possibility to prove this
result without the use of uniform convergence. The proof of our
extension theorem is inspired particularly by the paper of Ossa
\cite{OSS}, and proves that his technics are successful for usc
multi-valued maps. It is direct and elementary.

In this paper, \textit{usc} means upper semicontinuous, \textit{lsc}
means lower semicontinuous, $co(A)$ means the convex hull of $A$. If
$Y$ is a topological space and $X$ a subset of $Y$, then
$int_{Y}(X)$ refer to the interior of $X$ in $Y$ and $\overline{X}$
is the adherence (or the closure) of $X$ in $Y$. For a multi-valued
map $T:E_1\rightarrow E_2$, we denote $Dom(T)=\{x\in E_1,T(x)\neq
\emptyset\}$. In the whole of this document, the subsets are endowed
with the induced topology.
\section{Multi-valued version of the Tietze-Urysohn extension theorem}
Recall two definitions~:

A separated topological space $X$ is said to be completely normal
if it is hereditarily normal, that is : every subspace of $X$ is
normal. This definition is equivalent to the following : $X$ is
completely normal if and only if all subsets $A$ and $B$ of $X$
satisfying $\overline{A}\cap B=A\cap \overline{B}=\emptyset $ can
be separated by open sets, $i.e.$ there exists two open subsets of
$X,$ $U$ and $V,$ $A\subset U,$ $B\subset V$ such that $U\cap V=
\emptyset .$

A separated topological space $X$ is said to be perfectly normal
if each closed subset of $X$ is a $G_{\delta }$-set, $i.e.$
intersection of a countable open sets. Or equivalently, $X$ is
perfectly normal if each open subset of $X$ is an $F_{\sigma
}$-set, $i.e.\;$a countable union of closed sets.

The perfect normality imply complete normality and the converse is
false. See \cite{GAA} for more details about these notions.

The following theorem is the main result in this work.
\begin{theorem}
\label{T1} Let $X$ be a separated topological space, $A$ a closed subset of $%
X$ and $T:A\rightarrow \lbrack 0,1],$ a usc multi-valued map with closed
convex values. Suppose that one of the two conditions holds :

\begin{enumerate}
\item[$C1)$]  $X$ is completely normal,

\item[$C2)$]  $X$ is normal and $A$ is perfectly normal,
\end{enumerate}

Then, there exists a usc extension of $T$ with closed convex values defined
on $X$ into $[0,1].$ $i.e.$ $\exists \widetilde{T}:X\rightarrow \lbrack 0,1]$
usc with closed convex values such that $\widetilde{T}\mid _{A}\equiv T.$
\end{theorem}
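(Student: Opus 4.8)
My plan is to reduce the multi-valued statement to a pair of scalar semicontinuous extension problems and then solve the latter by a direct supremum formula, in the spirit of Ossa's uniform-convergence-free argument. Since the closed convex subsets of $[0,1]$ are exactly the closed subintervals, I would write $T(x)=[f(x),g(x)]$ with $f(x)=\min T(x)$ and $g(x)=\max T(x)$, so that $0\le f\le g\le 1$ on $A$. A short verification shows that $T$ is \emph{usc} with these values if and only if $g$ is usc and $f$ is lsc on $A$. Hence it suffices to produce an lsc extension $\widetilde f:X\rightarrow[0,1]$ of $f$ and a usc extension $\widetilde g:X\rightarrow[0,1]$ of $g$ with $\widetilde f\le\widetilde g$ on all of $X$; then $\widetilde T:=[\widetilde f,\widetilde g]$ is usc with nonempty closed convex values and restricts to $T$ on $A$. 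The delicate requirement is the inequality $\widetilde f\le\widetilde g$: extending each of $f,g$ \emph{alone} is trivial (assign the extreme value $0$, resp. $1$, off $A$), but these safe choices push $\widetilde f$ above $\widetilde g$, so the order constraint is what carries all the difficulty.

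For the construction I would fix an enumeration of $\mathbb{Q}\cap[0,1)$ and, for each such $q$, build an open set $O_q\subseteq X$ and a closed set $K_q\subseteq X$ subject to three conditions: the correct traces $O_q\cap A=\{a\in A:f(a)>q\}$ and $K_q\cap A=\{a\in A:g(a)\ge q\}$; the coupling $O_q\subseteq K_q$; and the monotonicity $O_{q'}\subseteq O_q$, $K_{q'}\subseteq K_q$ whenever $q<q'$. Setting $\widetilde f(x)=\sup\{q:x\in O_q\}$ and $\widetilde g(x)=\sup\{q:x\in K_q\}$ (with value $0$ when the index set is empty), monotonicity gives the level-set identities $\{\widetilde f>t\}=\bigcup_{q>t}O_q$ and $\{\widetilde g\ge t\}=\bigcap_{q<t}K_q$. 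The first is open, so $\widetilde f$ is lsc; the second is closed, so $\widetilde g$ is usc. The prescribed traces yield $\widetilde f\mid_A\equiv f$ and $\widetilde g\mid_A\equiv g$, the inclusions $O_q\subseteq K_q$ yield $\widetilde f\le\widetilde g$, and all values stay in $[0,1]$. With this reformulation the only thing left to supply is the family $\{O_q\},\{K_q\}$.

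The main obstacle, and the sole place where the hypotheses enter, is building these families off $A$ while honoring the three constraints simultaneously. On $A$ one already has, for $q<q'$, the inclusions $\{f>q'\}\subseteq\{f\ge q'\}\subseteq\{g\ge q'\}$, and the pair $\{f>q'\}$ and $\{g<q\}$ is separated in the sense of the complete normality criterion recalled above. Under $C1$ I would use that criterion to separate such pairs by open sets of $X$, slipping the new $O_{q'}$ and the complement of $K_q$ between them, and then process the rationals one at a time so that each newly chosen set is squeezed inside the sets already fixed for smaller rationals and outside those fixed for larger ones. Under $C2$ I would instead invoke perfect normality of $A$ to write each open trace $\{f>q\}$ as an $F_\sigma$ in $A$ (hence in $X$), and feed these closed pieces into a countable normality-based Urysohn recursion to realize the same system of nested separations. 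The hard point is maintaining the full coupled, monotone system of separations across all rationals at once, rather than inserting a single Urysohn function; this is exactly the device that replaces the uniformly convergent series in the classical Tietze–Urysohn proof. Once the families are in place, checking the level-set formulas, the bounds, and the traces on $A$ is routine.
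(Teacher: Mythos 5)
Your proposal is correct, but it takes a genuinely different route from the paper's proof. The paper never splits $T$ into endpoint functions: it builds a \emph{single} monotone family of closed sets $X_r$ indexed by dyadic rationals, with trace $X_r\cap A=T^{-1}([0,r])$ and the interior condition $int_A(A_r)\subseteq int_X(X_r)$, defines a hybrid map $F$ equal to $T$ on $A$ and to the scalar $\inf\{r: x\in X_r\}$ off $A$, proves $F$ is usc at points of $A$, and then takes $\widetilde{T}(x)=co\{y:(x,y)\in\overline{H}\}$, the convexified closure of the graph of $F$; upper semicontinuity comes from the closed-graph-plus-compact-range criterion, and a final argument checks that closing the graph does not enlarge the values on $A$. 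You instead reduce the theorem to the order-preserving scalar extension problem (lsc $\widetilde f\le$ usc $\widetilde g$), which is precisely the paper's Corollary 2 --- so you prove that corollary directly and deduce the theorem from it, whereas the paper derives the corollary from the theorem. What your route buys: the extension is given by explicit sup formulas, semicontinuity falls out of the level-set identities with no graph closure, no convexification, and no compactness argument, and the role of convexity (closed convex subsets of $[0,1]$ are intervals) is completely transparent. What it costs: two coupled families $\{O_q\},\{K_q\}$ instead of one, and the recursion must maintain traces, coupling and monotonicity simultaneously; you only sketch this, but the insertion step does work --- the pair $\{f>q\}$, $\{g<q\}$ is separated because $\overline{\{f>q\}}\subseteq\{g\ge q\}$ and $\overline{\{g<q\}}\subseteq\{f\le q\}$, so under $C1$ one gets an open $O_q^{\prime}$ with $O_q^{\prime}\cap A=\{f>q\}$ and $\overline{O_q^{\prime}}\cap A\subseteq\{g\ge q\}$, and then $O_q=(O_q^{\prime}\cap O_{p^*})\cup O_{r^*}$ and $K_q=K_{r^*}\cup\overline{O_q}\cup\{g\ge q\}$ (where $p^*,r^*$ are the nearest previously processed rationals below and above $q$) satisfy all constraints. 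Both proofs lean on the same tools at the point where the hypotheses enter: the separated-sets criterion for complete normality under $C1$, and Bonan's $F_\sigma$-separation lemma in normal spaces under $C2$ (which you propose to re-derive rather than cite). One slip to fix: your parenthetical on the trivial one-sided extensions has the constants backwards --- the lsc extension of $f$ must assign $1$ off $A$ and the usc extension of $g$ must assign $0$ off $A$ (with the assignment as you wrote it, the resulting functions are not semicontinuous); your subsequent clause shows you intended the correct choice.
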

\begin{proof}
Let $ B_{0}=\{0,1\},..., B_{n}=\{i/2^{n},i\in\{0,...,2^{n}\}\}$.
Define the set $B=\cup _{n\in \mathrm{I\!N}}B_{n}$ the set of all
dyadic numbers of $[0,1]$. We have, $B_{n+1}=B_{n}\cup
\{(r_{i}+r_{i+1})/2,i\in\{0,...,2^{n}-1\}\} ,r_{i}=i/2^{n}\in
B_{n}\}$. It is well known that $B$ is dense in $[0,1]$.
\newline
Let for every $r\in B,$ $A_{r}=T^{-1}([0,r])=\{x\in A,\exists y\in
T(x),y\leq r\}$. Since $T$ is usc on $A$, for all $r\in [0,1]$,
$A_r$ is closed.
\newline After this, we construct closed subsets $X_{r},r\in B,$
of $X$ satisfying the following three conditions :
\newline 1) $X_{r}\cap A=A_{r},$\newline 2) $int_{A}(A_{r})\subset
int_{X}(X_{r}),$\newline 3) $X_{r}\subset X_{s,}$ if $s,r\in B$
and $r<s.$\newline Put $X_{1}=X$. In the following step, the
condition $C1)$ or $C2)$ of the theorem is needed. We illustrate
the use of each of them. \newline Begin by the condition $C1)$
: the space $X$ is completely normal. We have, $\overline{int_{A}(A_{0})}%
\cap \complement_{A}A_{0}=int_{A}(A_{0})\cap
\overline{\complement_{A}A_{0}}=\emptyset $. Then,
we can separate $int_{A}(A_{0})$ and $\complement_{A}A_{0}$ by open sets (in $X$) $%
O_{0}$ and $O_{c}$ respectively. This gives $\overline{O_{0}}\cap
A\subset \complement_A (O_c\cap A) \subset A_{0}$. We conclude the
two relations $O_{0}\cap A=int_{A}(A_{0})$ and
$\overline{O_{0}}\cap A\subset A_{0}$, put $X_{0}=A_{0}\cup
\overline{O} _{0}$. Then, the sets $X_{r},$ $r\in B_{0},$
satisfying $1)-3)$ are defined.
\newline
For the same result, let us use the condition $C2)$ : $X$ is
normal and $A$ is
perfectly normal. In this case, $int_{A}(A_{0})$ and $\complement_{A}A_{0}$ are $%
F_{\sigma }$-sets, then, it can be written us a countable unions
of closed
sets, let $int_{A}(A_{0})=\underset{i\in \mathrm{I\!N}}{\cup }F_{i}$ and $%
\complement_{A}A_{0}=\underset{i\in \mathrm{I\!N}}{\cup }G_{i}.$ Furthermore, we have $%
\overline{int_{A}(A_{0})}\cap \complement_{A}A_{0}=int_{A}(A_{0})\cap \overline{%
\complement_{A}A_{0}}=\emptyset$. We apply a Bonan's lemma
\cite{BON} to separate $int_{A}(A_{0})$ and $\complement_{A}A_{0}$
by open sets, and we define $X_{0}$ by the same way.\newline Let
the sets $X_{r},r\in \cup _{k\leq n}B_{k}$ satisfying $1)-3)$ be
given. Then we obtain the sets $X_{r}$, $r\in B_{n+1}$ like this :
Let $r\in
B_{n+1}\backslash B_{n}$ and $i\in \{0,...,2^{n}-1\}$ such that $%
r=(r_{i}+r_{i+1})/2,$ with
$r_{i}=i/2^{n},r_{i+1}=(i+1)/2^{n}\;$are elements of $B_{n}.$
\newline We proceed as previously by the use of condition $C1)$ or $C2)$,
when constructing $X_{0}$, with the set $A_{r}$ in the place of
$A_{0}$. We obtain an open set $O_{r}^{\prime }$ of $X$ such that
$\overline{O_{r}^{\prime }}\cap A\subset
A_{r}$ and $O_{r}^{\prime }\cap A=int_{A}(A_{r}).$ Since $%
int_{A}(A_{r})\subset int_{A}(A_{r_{i+1}})\subset int_{X}(X_{r_{i+1}}),\;$%
the set $O_{r}=O_{r}^{\prime }\cap int_{X}(X_{r_{i+1}})$ is open
in $X$ and the two relations $\overline{O_{r}}\cap A\subset A_{r}$
and $O_{r}\cap A=int_{A}(A_{r})$ are obtained. Put
$X_{r}=A_{r}\cup \overline{O_{r}\cup X_{r_{i}}}.$ We easily verify
that conditions $1)-3)$ are satisfied (the verification is down
for the sets $X_{r_i},X_r$ and $X_{r_{i+1}}$). At
this moment, the recursive process for the construction of all the sets $%
X_{r},r\in B,$ satisfying conditions 1)-3) is given.\newline
Thereafter, we define the map $F:X\rightarrow \lbrack 0,1],$ as follows :%
\newline
$F(x)=\left\{
\begin{array}{l}
T(x)\text{ if }x\in A, \\
\inf \{r,x\in X_{r}\}\text{ otherwise.}
\end{array}
\right. $\newline Verify that $F$, as a map defined on $X$ is usc
at each point of $A$ ($i.e.$ with respect to the topology of $X$).
Let $x_{0}\in A$ and $a,b\in \lbrack 0,1]$ such that $
T(x_{0})=[t_{1},t_{2}]\subset ]a,b[$. The other cases are
analogous and explained next.
Note also that the convexity of the values of $T$ is considered here. Let $%
r_{1},r_{2}\in B$ such that $t_{2}<r_{1}<b$ and
$a<r_{2}<t_{1}.$\newline
In one hand, $x_{0}\in int_{A}(A_{r_{1}})$ (because $T$ is usc on $A$ and $%
T(x_{0})\subset \lbrack 0,r_{1}[$) and since
$int_{A}(A_{r_{1}})\subset int_{X}(X_{r_{1}})$, there exists an
open neighborhood $O_{1}$ of $x_{0}$ (in $X$) such that $\forall
x\in O_{1}\backslash A,$ $F(x)\leq r_{1}$. In other hand,
$x_{0}\notin A_{r_{2}}$. Since $x_0$ is an element of $A$,
$x_{0}\notin X_{r_{2}}$. Let $O_{2}$ be an open neighborhood of
$x_{0}$ in $X$ such that $ O\cap X_{r_{2}}=\emptyset $. We have,
$\forall x\in O_{2}\backslash A$, $F(x)\geq r_{2}.$ We take in the
last time an open set $O_{3}$ (in $X$) such that $\forall x\in
O_{3}\cap A,$ $T(x)\subset ]a,b[$ and define $O=O_{1}\cap
O_{2}\cap O_{3}$. We obtain $\forall x\in O,F(x)\subset ]a,b[,$
which gives the fact that $F$ is usc on $A$. The case of
$t_{2}=b=1$ and $T(x_0)\subset ]a,b]$ (resp. $t_{1}=a=0$ and
$T(x_0)\subset [a,b[$) is a simple particular case where it
suffices to consider only $O_{2}$ (resp. $O_{1} $). In the other
case, we put $O=X$.
\newline Denote by $H$ the graph of $F$ in
$X\times \lbrack 0,1]$. The desired map is $\widetilde{T}$ given
as follows~: $\widetilde{T}(x)=co\{y,(x,y)\in \overline{H}\}$.
This map is usc, because its graph is closed and the image space
($[0,1]$) is compact. We end by applying the upper semicontinuity
of $F$ on $A$ to prove that its values are not affected on $A$
when passing throw the closure of its graph.
\newline Let $x_{0}\in A.\;$We can separate $F(x_{0})=T(x_{0})$
and any point $y\notin T(x_{0})$ of $[0,1]$ by open sets $V_{0}$,
$V_{y}$ respectively. Then, there exists an open neighborhood
$O_{x_{0}}$ of $x_{0}$ such that $F(O_{x_{0}})\subset V_{0}.$ We
have obtained, $(x_{0},y)\in O_{x_{0}}\times V_{y},$
$F(O_{x_{0}})\subset V_{0}$ and $V_{y}\cap V_{0}=\emptyset ,$
which means that $O_{x_{0}}\times V_{y}\cap H=\emptyset .$ That is
$(x_{0},y)$ is not a point of the adherence of $H$. We can finally
affirm that : $\widetilde{T}\mid _{A}\equiv F\mid _{A}\equiv T$
\end{proof}

\medskip
Note that the previous theorem, stated only with condition $C1)$, is
proved differently by Shishkov \cite{SHI}.

\section{Applications} Now we give applications of Theorem
\ref{T1} in different domains. In the first time, analogously to
the characterization by Tietze-Urysohn extension theorem of normal
spaces, we give a characterization of completely normal spaces
(after Gutev \cite{GUT} and Shishkov \cite{SHI}, using Theorem
\ref{T1}). This proves that the last theorem, stated only with
condition $C1)$, can not be improved by the relaxation of the
complete normality imposed to $X$.
\begin{corollary}
\label{CC} Let $X$ be a separated topological space. Then, $X$ is
completely normal if and only if every usc multi-valued map $T$
defined on a closed subset of $X$ into $[0,1]$ with nonempty
closed convex values has a multi-valued usc extension defined on
the whole of $X$ into $[0,1]$ with nonempty closed convex values.
\end{corollary}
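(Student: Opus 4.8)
The statement is a biconditional, so I would treat the two implications separately. The forward implication --- complete normality implies the extension property --- is nothing more than Theorem \ref{T1} applied under condition $C1)$, together with the observation that the closed convex values produced there are nonempty whenever the original ones are; I would dispose of it in one line. All the work is in the converse, for which I would use the separation characterization of complete normality recalled just before the theorem: it suffices to show that any two subsets $A,B\subset X$ with $\overline{A}\cap B=A\cap\overline{B}=\emptyset$ can be separated by disjoint open sets. The plan is to encode this separation problem as an extension problem and read off the separating open sets from the extension. Put $C=\overline{A}\cap\overline{B}$ and $D=\overline{A}\cup\overline{B}$, a closed subset of $X$, and define $T:D\rightarrow[0,1]$ by $T(x)=\{0\}$ on $\overline{A}\setminus\overline{B}$, $T(x)=\{1\}$ on $\overline{B}\setminus\overline{A}$, and $T(x)=[0,1]$ on $C$. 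Its values are nonempty, closed and convex.

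The key point --- and the one place where being multi-valued genuinely helps --- is the choice $T\equiv[0,1]$ on the overlap $C$: since $[0,1]$ is the whole target, the only open set containing it is $[0,1]$ itself, so $T$ is automatically usc at every point of $C$. Away from $C$ upper semicontinuity is immediate from $\overline{A}$ and $\overline{B}$ being closed: at a point of $\overline{A}\setminus\overline{B}$ a neighbourhood avoiding $\overline{B}$ keeps $T$ equal to $\{0\}$, and symmetrically on $\overline{B}\setminus\overline{A}$. (Equivalently, for closed $K$ the preimage $\{x:T(x)\cap K\neq\emptyset\}$ is one of $\overline{A}$, $\overline{B}$, $C$, $D$, $\emptyset$, hence closed.)

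I would then invoke the hypothesis to extend $T$ to a usc map $\widetilde{T}:X\rightarrow[0,1]$ with nonempty closed convex values, and set
\[
U=\{x\in X:\widetilde{T}(x)\subset[0,1/2)\},\qquad V=\{x\in X:\widetilde{T}(x)\subset(1/2,1]\}.
\]
Both are open, because $\widetilde{T}$ is usc and $[0,1/2)$, $(1/2,1]$ are open in $[0,1]$, and they are disjoint, since a common point would force $\widetilde{T}(x)\subset\emptyset$, contradicting nonemptiness. Finally, the hypotheses $A\cap\overline{B}=\emptyset$ and $\overline{A}\cap B=\emptyset$ yield $A\cap C=A\cap\overline{B}=\emptyset$ and $B\cap C=\emptyset$, so every $x\in A$ lies in $\overline{A}\setminus\overline{B}$ with $\widetilde{T}(x)=T(x)=\{0\}$, whence $A\subset U$, and symmetrically $B\subset V$. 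Thus $U$ and $V$ separate $A$ and $B$, proving that $X$ is completely normal. I expect no serious obstacle beyond correctly identifying the map: the conceptual crux is realizing that assigning the full interval on $\overline{A}\cap\overline{B}$ sidesteps the incompatibility that blocks a single-valued Urysohn-type function when the two closures meet, while the remaining verifications are routine.
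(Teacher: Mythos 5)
Your proposal is correct and follows essentially the same route as the paper: the forward direction is Theorem \ref{T1} under $C1)$, and for the converse you build exactly the same multi-valued map (value $[0,1]$ on $\overline{A}\cap\overline{B}$, $\{0\}$ on $\overline{A}\setminus\overline{B}$, $\{1\}$ on $\overline{B}\setminus\overline{A}$), extend it, and separate $A$ and $B$ by the open sets $\{x:\widetilde{T}(x)\subset[0,1/2[\}$ and $\{x:\widetilde{T}(x)\subset]1/2,1]\}$. Your verification of upper semicontinuity and of the disjointness of the separating sets is in fact slightly more detailed than the paper's.
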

\begin{proof}
The necessity is a straightforward consequence of Theorem
\ref{T1}. The sufficiency is proved easily as follows : Let $A$
and $B$ be subsets of $X$ such that $A\cap \overline{B}=B\cap
\overline{A}=\emptyset .$ Define the multi-valued map
$T:\overline{A}\cup \overline{B}\rightarrow \lbrack 0,1],$ by $:$
$T(x)=\left\{
\begin{array}{l}
\lbrack 0,1]\text{ if }x\in \overline{A}\cap \overline{B}, \\
0\text{ if }x\in \overline{A}\backslash \overline{A}\cap \overline{B}, \\
1\text{ if }x\in \overline{B}\backslash \overline{A}\cap
\overline{B}.
\end{array}
\right. $ \newline
Let us verify that $T$ is usc on $\overline{A\cup B}$. Let $x_{0}\in $ $%
\overline{A\cup B}$ and $V$ an open subset of $[0,1]$ containing
$T(x_{0}).$ If $x_{0}\in \overline{A}\cap \overline{B},$ then
$V=[0,1]$, we can choose,
in this case, $O=\overline{A\cup B}$ as a neighborhood of $x_{0}$ such that $%
T(O)\subset V.$ If $x_{0}\in \overline{A}\backslash
\overline{A}\cap
\overline{B}= \complement_{\overline{A\cup B}}\overline{B}$, choose $%
O=\complement_{\overline{A\cup B}}\overline{B}$ as a neighborhood
of $x_{0}$ such that
$T(O)\subset V.$ The other case is similar. Then, $T$ is usc on $\overline{%
A\cup B}.$ \newline From the hypothesis, there exists a usc
extension $\widetilde{T}$ of $T$
defined on the whole of $X$, with closed convex values. The sets $\widetilde{T%
}_{-1}(]1/2,1])=\{x\in X,\widetilde{T}(x)\subset ]1/2,1]\}$ and $\widetilde{T%
}_{-1}([0,1/2[)$ are open and separate $A$ and $B.$ Then $X$ is
completely normal
\end{proof}

Theorem \ref{T1} can be applied to extend real semicontinuous
functions. In what follows, we use the same notations $usc$ and
$lsc$ for corresponding semicontinuity concepts for single-valued
real functions.
\begin{corollary}
Let $X$ be a separated topological space and $A$ a closed subset
of $X.$ Suppose that one of the conditions $C1)$ or $C2)$ is
satisfied. Given two functions $f,g:A\rightarrow \lbrack 0,1],$
such that $f$ is lsc, $g$ is usc and $f(x)\leq g(x)$, for every
$x\in A$. \newline Then, there exists extensions $\widetilde{f}$
and $\widetilde{g}$ of $f$ and $g$ respectively, such that
$\widetilde{f}$ is lsc, $\widetilde{g}$ is usc and
$\widetilde{f}(x)\leq \widetilde{g}(x)$, for every $x\in X$.
\end{corollary}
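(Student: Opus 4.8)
The plan is to repackage the pair $(f,g)$ as a single multi-valued map and then invoke Theorem~\ref{T1}. Define $T:A\rightarrow[0,1]$ by $T(x)=[f(x),g(x)]$. Because $f(x)\le g(x)$ for every $x\in A$, each value $T(x)$ is a nonempty closed subinterval of $[0,1]$, hence nonempty, closed and convex. So the only hypothesis of Theorem~\ref{T1} that requires work is the upper semicontinuity of $T$.

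The first and essential step is therefore to check that $T$ is usc on $A$. Given $x_0$ and an open $V$ with $T(x_0)\subset V$, compactness of the interval $[f(x_0),g(x_0)]$ lets me assume $V=]a,b[$ (or a one-sided analogue when an endpoint equals $0$ or $1$), so that $a<f(x_0)$ and $g(x_0)<b$. Since $f$ is lsc the set $\{x:f(x)>a\}$ is an open neighborhood of $x_0$, and since $g$ is usc the set $\{x:g(x)<b\}$ is an open neighborhood of $x_0$; on their intersection $O$ one has $a<f(x)\le g(x)<b$, that is $T(O)\subset V$. Hence $T$ is usc with nonempty closed convex values, and Theorem~\ref{T1} applies under either $C1)$ or $C2)$, producing a usc extension $\widetilde{T}:X\rightarrow[0,1]$ with nonempty closed convex values.

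Since the nonempty closed convex subsets of $[0,1]$ are exactly the closed subintervals, I would write $\widetilde{T}(x)=[\widetilde{f}(x),\widetilde{g}(x)]$ with $\widetilde{f}(x)=\min\widetilde{T}(x)$ and $\widetilde{g}(x)=\max\widetilde{T}(x)$; these exist because $\widetilde{T}(x)$ is a nonempty compact subset of $[0,1]$. By construction $\widetilde{f}(x)\le\widetilde{g}(x)$ for all $x\in X$, and on $A$ we have $\widetilde{T}\mid_A\equiv T$, so $\widetilde{f}\mid_A\equiv f$ and $\widetilde{g}\mid_A\equiv g$; thus $\widetilde{f}$, $\widetilde{g}$ extend $f$, $g$.

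It then remains to verify the semicontinuity of the two selections, which follows directly from upper semicontinuity of $\widetilde{T}$. For $\widetilde{g}=\max\widetilde{T}$: if $\widetilde{g}(x_0)<b$ then $\widetilde{T}(x_0)\subset[0,b[$, and upper semicontinuity gives a neighborhood on which $\widetilde{T}(x)\subset[0,b[$, hence $\widetilde{g}(x)<b$; so $\{x:\widetilde{g}(x)<b\}$ is open and $\widetilde{g}$ is usc. Symmetrically, for $\widetilde{f}=\min\widetilde{T}$: if $\widetilde{f}(x_0)>a$ then $\widetilde{T}(x_0)\subset]a,1]$, and on a suitable neighborhood $\widetilde{T}(x)\subset]a,1]$, so $\widetilde{f}(x)>a$; hence $\{x:\widetilde{f}(x)>a\}$ is open and $\widetilde{f}$ is lsc. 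I expect the only genuinely delicate point to be the upper semicontinuity of $T=[f,g]$, and in particular the use of $f\le g$ to keep the values nonempty together with the compactness reduction of a general open $V$ to an interval; everything after the application of Theorem~\ref{T1} is a routine extraction of the $\min$ and $\max$ selections.
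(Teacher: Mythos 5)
Your proposal is correct and follows essentially the same route as the paper: define $T(x)=[f(x),g(x)]$, verify it is usc with nonempty closed convex values, apply Theorem~\ref{T1}, and recover $\widetilde{f}$ and $\widetilde{g}$ as the $\min$ and $\max$ selections of the extension $\widetilde{T}$. The only cosmetic difference is that you check semicontinuity of the selections by showing the sets $\{x:\widetilde{g}(x)<b\}$ and $\{x:\widetilde{f}(x)>a\}$ are open, whereas the paper equivalently shows the complementary level sets $\widetilde{T}^{-1}([\lambda,1])$ and $\widetilde{T}^{-1}([0,\lambda])$ are closed.
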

\begin{proof}
Define the multi-valued map $T:A\rightarrow \lbrack 0,1]$ by
$T(x)=[f(x),g(x)]$, for every $x\in A.$ Let us verify that $T$ is
usc on $A.$ Let $x_{0}\in A$ and $a,b\in \lbrack 0,1]$ such that
$T(x_{0})\subset ]a,b[.$ The other possibilities are particular
cases. We have $f(x_{0})>a$ and $g(x_{0})<b.$ Then, there exists
two neighborhoods $V_{1}$ and $V_{2}$ of $x_{0}$ (in $A$) such
that $f(x)>a,$ for every $x\in V_{1}$ and $g(x)<b$, for every
$x\in V_{2}.$ Put $V=V_{1}\cap V_{1}.$ This gives $T(V)\subset
]a,b[.$ If $a=0$, $b\neq 1$ and $T(x_0)\subset [a,b[$ (resp.
$b=1$, $a\neq 0$ and $T(x_0)\subset ]a,b]$), we do not need
$V_{1}$ (resp. $V_{2}$). If $a=0$, $b=1$ and $T(x_0)\subset
[a,b]$, we put $V=A.$\newline We infer, by Theorem \ref{T1}, an
usc extension $\widetilde{T}$ of $T$ with nonempty convex compact
values. Define $\widetilde{f}$ and $\widetilde{g}$ as follows~:
$\widetilde{f}(x)=\min \{y,y\in T(x)\}$ and $\widetilde{g}
(x)=\max \{y,y\in T(x)\}$, for every $x\in X.$
\newline It remains to verify the desired properties of
$\widetilde{f}$ and $ \widetilde{g}$. It is clear that
$\widetilde{f}_{\mid A}=f$, $\widetilde{g}_{\mid A}=g$ and
$\widetilde{f}(x)\leq \widetilde{g}(x),$ for every $x\in X$.
$\;$Let $\lambda \in [0,1]$. Then, $\{x\in X,\widetilde{f}(x)\leq
\lambda \}= \widetilde{T}^{-1}([0,\lambda ])$ and $\{x\in
X,\widetilde{g}(x)\geq \lambda \}=\widetilde{T}^{-1}([\lambda
,1])$.\ Since $\widetilde{T}$ is usc, the last level sets are
closed.\ This proves that $\widetilde{f}$ is lsc on $X$ and
$\widetilde{g}$ is usc on $X$
\end{proof}

We apply Theorem \ref{T1} to prove a version of the
Gale-Mas-Colell's \cite{GMC1}, Shafer-Sonnenschein's \cite{SS} and
Gourdel's \cite{GOU} fixed point theorems with an arbitrary number
(possibly uncountable) of multi-valued maps.

\begin{theorem}
\label{PFEM}Let $X_{\alpha },\alpha \in I$, $I$ is arbitrary and
$X_{\alpha}=[0,1]$. Let for every $\alpha$ in $I$, $T_{\alpha
}:\prod_{\lambda \in I}X_{\lambda }\rightarrow X_{\alpha }$ an usc
multi-valued map with empty or nonempty closed convex values such
that $Dom(T_{\alpha })$ is perfectly normal.

Then, $\exists x\in \prod_{\lambda \in I}X_{\lambda }$ such that : $\forall
\alpha \in I,$
\begin{equation*}
\text{either }x_{\alpha }\in T_{\alpha }(x)\text{ or }T_{\alpha
}(x)=\emptyset .
\end{equation*}
\end{theorem}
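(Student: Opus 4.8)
The plan is to reduce this fixed-point theorem to a known Kakutani-type theorem on a compact convex domain by using Theorem~\ref{T1} to extend each partially-defined map $T_\alpha$ to the whole product. The main difficulty here is precisely what the introduction flags: the product $\prod_{\lambda\in I}X_\lambda$ is a compact (by Tychonoff) convex subset of a topological vector space, but for uncountable $I$ it is generally \emph{not} perfectly normal, nor even completely normal. So I cannot directly apply a fixed-point theorem that presupposes each $T_\alpha$ is an usc self-map defined everywhere; the maps are only defined (have nonempty values) on $Dom(T_\alpha)$, which is closed (since $T_\alpha$ is usc) and is assumed perfectly normal. This perfect-normality hypothesis on the \emph{domain} is exactly the ingredient that condition $C2)$ of Theorem~\ref{T1} needs.

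Let me examine what I'd do. First, write $X=\prod_{\lambda\in I}X_\lambda$. This is normal (indeed compact Hausdorff). For each $\alpha$ with $Dom(T_\alpha)\neq\emptyset$, the set $A_\alpha:=Dom(T_\alpha)$ is a closed, perfectly normal subset of the normal space $X$, and $T_\alpha\colon A_\alpha\to[0,1]=X_\alpha$ is usc with nonempty closed convex values. So condition $C2)$ is met, and Theorem~\ref{T1} yields a usc extension $\widetilde{T_\alpha}\colon X\to[0,1]$ with nonempty closed convex values, agreeing with $T_\alpha$ on $A_\alpha$. (If $Dom(T_\alpha)=\emptyset$, I simply set $\widetilde{T_\alpha}$ to be any constant usc selection, say $\widetilde{T_\alpha}\equiv\{0\}$.)

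Next, assemble the product map $\widetilde{T}\colon X\to X$ by $\widetilde{T}(x)=\prod_{\alpha\in I}\widetilde{T_\alpha}(x)$. Each factor is nonempty closed convex, so $\widetilde{T}(x)$ is a nonempty closed convex subset of the compact convex set $X$; and $\widetilde{T}$ is usc since each coordinate map is usc and $X$ is a product of compact spaces (upper semicontinuity is preserved coordinatewise into a product of compacta). Now I apply a Fan--Glicksberg/Kakutani fixed point theorem for usc convex-valued self-maps of a compact convex subset of a locally convex space (the cited Gale--Mas-Colell / Gourdel machinery): there exists $x\in X$ with $x\in\widetilde{T}(x)$, i.e. $x_\alpha\in\widetilde{T_\alpha}(x)$ for every $\alpha$.

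Finally, I translate this fixed point back to the original maps. Fix $\alpha\in I$. If $T_\alpha(x)=\emptyset$, the desired alternative holds trivially. Otherwise $x\in Dom(T_\alpha)=A_\alpha$, and there $\widetilde{T_\alpha}$ coincides with $T_\alpha$ by construction, so $x_\alpha\in\widetilde{T_\alpha}(x)=T_\alpha(x)$. Thus for every $\alpha$, either $x_\alpha\in T_\alpha(x)$ or $T_\alpha(x)=\emptyset$, which is exactly the conclusion. \emph{The hard part} is not any of these routine steps but the conceptual maneuver of the previous paragraph: the extension theorem is what lets me replace genuinely partial correspondences on a badly-behaved (non-completely-normal) product by everywhere-defined usc correspondences to which classical fixed-point theory applies, with the perfect normality of each domain — not of the product — carrying the whole argument through condition $C2)$.
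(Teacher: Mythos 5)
Your proposal is correct and follows essentially the same route as the paper: extend each $T_{\alpha}$ (restricted to its closed, perfectly normal domain) via Theorem~\ref{T1} under condition $C2)$, form the product correspondence $x\mapsto\prod_{\alpha\in I}\widetilde{T}_{\alpha}(x)$, obtain a fixed point by Glicksberg's theorem, and read off the alternative on each coordinate. The only difference is that you make explicit the details the paper leaves implicit (closedness of $Dom(T_{\alpha})$ from upper semicontinuity, normality of the compact product, the empty-domain case, and the passage back from $\widetilde{T}_{\alpha}$ to $T_{\alpha}$), which is a faithful filling-in rather than a different argument.
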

\begin{proof}
We apply Theorem \ref{T1} to find extensions
$\widetilde{T}_{\alpha }$ of $T_{\alpha },$ $\alpha \in I.$
Consider the map $F:\prod_{\lambda \in I}X_{\lambda }\rightarrow
\prod_{\lambda \in I}X_{\lambda },$ defined by$
F(x)=\prod_{\lambda \in I}\widetilde{T}_{\lambda }(x).$ Any fixed
point of $F $(apply any fixed point theorem for Kakutani maps in
locally convex spaces, for example Glicksberg's fixed point
theorem) ensure the result
\end{proof}

Now, we give an application to qualitative games with an
infinitely number of players. A qualitative game is a pair
$(X_{i},P_{i})_{i\in I},$ $I$ is the set of players, $X_{i}$ is
the set of strategies of the player $i\in I$ and $P_{i}$ is his
preference correspondence. For a literature in qualitative games,
see \cite{BOR,TW,CHE}.

\begin{theorem}
\label{QG} Let $G=(X_{i},P_{i})_{i\in I}$ be a qualitative game.
For every $i\in I$, $X_{i}=[0,1]$, $P_{i}:X=\prod_{j\in
I}X_{j}\rightarrow X_{i}$ is usc with empty or nonempty closed
convex values such that $Dom(P_{i})$ is perfectly normal and $I$
is an arbitrary set of indices. If $\forall i\in I,\forall x\in
X,x_{i}\notin P_{i}(x)$, then $G$ has an equilibrium, that is :
$\exists $ $y\in X,$ such that $\forall i\in I,P_{i}(y)=\emptyset
.$
\end{theorem}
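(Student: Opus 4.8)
The plan is to derive Theorem~\ref{QG} as a consequence of the fixed-point result Theorem~\ref{PFEM}, since the hypotheses on the preference correspondences $P_i$ are exactly those imposed on the maps $T_\alpha$ there. First I would set $T_i = P_i$ for each $i \in I$, identifying the index set $I$ of players with the index set of Theorem~\ref{PFEM}, and the strategy spaces $X_i = [0,1]$ with the factors $X_\lambda$. Each $P_i : X = \prod_{j\in I}X_j \rightarrow X_i$ is usc with empty or nonempty closed convex values and $Dom(P_i)$ perfectly normal, so the family $(P_i)_{i\in I}$ satisfies all the assumptions of Theorem~\ref{PFEM}.

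Applying Theorem~\ref{PFEM} to this family produces a point $y \in X = \prod_{j\in I}X_j$ such that for every $i \in I$, either $y_i \in P_i(y)$ or $P_i(y) = \emptyset$. The main step is then to eliminate the first alternative using the irreflexivity hypothesis. By assumption, $\forall i \in I,\ \forall x \in X,\ x_i \notin P_i(x)$; specialising to $x = y$ gives $y_i \notin P_i(y)$ for every $i$. Hence the alternative $y_i \in P_i(y)$ is impossible, and we are forced to conclude $P_i(y) = \emptyset$ for every $i \in I$. This point $y$ is precisely an equilibrium of the qualitative game $G$, so the proof is complete.

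I do not expect a serious obstacle here, as the argument is a direct combination of the fixed-point theorem with the irreflexivity condition; the only care needed is verifying that the hypotheses translate verbatim from Theorem~\ref{PFEM}. The genuine work has already been done upstream: the extension result of Theorem~\ref{T1} (which requires the perfect normality of each $Dom(P_i)$ to handle the uncountable product, where complete normality would fail) and the Glicksberg-type fixed point argument in the proof of Theorem~\ref{PFEM}. The present statement merely harvests those results, with irreflexivity doing the final bookkeeping to turn a fixed point into an equilibrium.
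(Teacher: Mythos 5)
Your proposal is correct and follows exactly the paper's route: the paper's proof of Theorem~\ref{QG} is simply the observation that it is a straightforward consequence of Theorem~\ref{PFEM}, and your argument spells out precisely that reduction, with the irreflexivity hypothesis $x_i\notin P_i(x)$ eliminating the alternative $y_i\in P_i(y)$ and forcing $P_i(y)=\emptyset$ for every $i$.
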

\begin{proof}
Is a straightforward consequence of Theorem \ref{PFEM}
\end{proof}

\end{document}